\documentclass[runningheads]{llncs}
\usepackage[utf8]{inputenc}

\usepackage{amsmath}
\usepackage{amssymb}
\usepackage[dvipsnames]{xcolor}
\usepackage{setspace}
\usepackage{enumerate}
\usepackage{enumitem}
\usepackage{stmaryrd}

\newcommand{\tr}{\mathrm{tr}}
\newcommand{\pow}{\mathrm{pow}}
\newcommand{\inv}{\mathrm{inv}}
\newcommand{\diag}{\mathrm{diag}}

\newcommand{\Sym}{\mathrm{Sym}}
\newcommand{\Frob}{\mathrm{Frob}}
\newcommand{\TO}{\rightarrow}
\newcommand{\lto}{\longrightarrow}
\newcommand{\lmto}{\longmapsto}
\newcommand{\ls}{\leqslant}
\newcommand{\dotprod}[2]{\langle #1|#2\rangle}

\newcommand{\E}{\mathbb{E}}
\newcommand{\R}{\mathbb{R}}
\newcommand{\N}{\mathbb{N}}
\newcommand{\M}{\mathcal{M}}
\newcommand{\mc}{\mathcal}
\newcommand{\mf}{\mathfrak}
\newcommand{\gs}{\geqslant}

\begin{document}

\title{Exploration of Balanced Metrics on\\ Symmetric Positive Definite Matrices}
\titlerunning{Exploration of Balanced Metrics on SPD matrices}
\author{Yann Thanwerdas \and Xavier Pennec}
\authorrunning{Y. Thanwerdas, X. Pennec}

\institute{Universit\'e C\^ote d'Azur, Inria, Epione, France}

\maketitle

\begin{abstract}
    Symmetric Positive Definite (SPD) matrices have been used in many fields of medical data analysis. Many Riemannian metrics have been defined on this manifold but the choice of the Riemannian structure lacks a set of principles that could lead one to choose properly the metric. This drives us to introduce the principle of balanced metrics that relate the affine-invariant metric with the Euclidean and inverse-Euclidean metric, or the Bogoliubov-Kubo-Mori metric with the Euclidean and log-Euclidean metrics. We introduce two new families of balanced metrics, the mixed-power-Euclidean and the mixed-power-affine metrics and we discuss the relation between this new principle of balanced metrics and the concept of dual connections in information geometry.
\end{abstract}

\section{Introduction}

Symmetric Positive Definite (SPD) matrices are used in many applications: for example, they represent covariance matrices in signal or image processing \cite{Barachant13,Deligianni11,Cheng13} and they are diffusion tensors in diffusion tensor imaging \cite{lenglet_statistics_2006,pennec_riemannian_2006,Fletcher07}. Many Riemannian structures have been introduced on the manifold of SPD matrices depending on the problem and showing significantly different results from one another on statistical procedures such as the computation of barycenters or the principal component analysis. Non exhaustively, we can cite Euclidean metrics, power-Euclidean metrics \cite{Dryden10}, log-Euclidean metrics \cite{Arsigny06}, which are flat; affine-invariant metrics \cite{pennec_riemannian_2006,Fletcher07,Lenglet06} which are negatively curved; the Bogoliubov-Kubo-Mori metric \cite{Michor00} whose curvature has a quite complex expression.

Are there some relations between them? This question has practical interests. First, understanding the links between these metrics could lead to interesting formulas and allow to perform more efficient algorithms. Second, finding families of metrics that comprise these isolated metrics could allow to perform optimization on the parameters of these families to find a better adapted metric. Some relations already exist. For example, the power-Euclidean metrics \cite{Dryden10} (resp. power-affine metrics \cite{Thanwerdas19}) comprise the Euclidean metric (resp. affine-invariant metric) and tend to the log-Euclidean metric when the power tends to zero.

We propose the principle of \textit{balanced metrics} after observing two facts. The affine-invariant metric $g^A_\Sigma(X,Y) = \tr((\Sigma^{-1}X\Sigma^{-1})Y)$ on SPD matrices 
may be seen as a balanced hybridization of 
the Euclidean metric $g^E_\Sigma(X,Y)=\tr(XY)$ on one vector and of the inverse-Euclidean metric (the Euclidean metric on precision matrices) $g^I_\Sigma(X,Y)=\tr((\Sigma^{-1}X\Sigma^{-1})(\Sigma^{-1}Y\Sigma^{-1}))$ on the other vector. Moreover, the definition of the Bogoliubov-Kubo-Mori metric can be rewritten as  $g^{BKM}_\Sigma(X,Y)=\tr(\partial_X\log(\Sigma)\,Y)$ where it appears as a balance of the Euclidean metric and the log-Euclidean metric $g^{LE}_\Sigma(X,Y)=\tr(\partial_X\log(\Sigma)\,\partial_Y\log(\Sigma))$. These observations raise a few questions. Given two metrics, is it possible to define a balanced bilinear form in general? If yes, is it clear that this bilinear form is symmetric and positive definite? If it is a metric, are the Levi-Civita connections of the two initial metrics dual in the sense of information geometry?

In this work, we explore this principle through the affine-invariant metric, the Bogoliubov-Kubo-Mori metric and we define two new families of balanced metrics, the mixed-power-Euclidean and the mixed-power-affine metrics. In section 2, we show that if a balanced metric comes from two flat metrics, the three of them define a dually flat structure. In particular, we show that the balanced structure defined by the Euclidean and the inverse-Euclidean metrics corresponds to the dually flat structure given by the $\pm1$-connections of Fisher information geometry. In section 3, we enlighten the balanced structure of the BKM metric and we generalize it by defining the family of mixed-power-Euclidean metrics. In section 4, we define the family of mixed-power-affine metrics and we discuss the relation between the concepts of balanced metric and dual connections when the two initial metrics are not flat.

\section{Affine-invariant metric as a balance of Euclidean and inverse-Euclidean metrics}

Because the vocabulary may vary from one community to another, we shall first introduce properly the main geometric tools that we use in the article (Section 2.1). Then we examine in Section 2.2 the principle of balanced metric in the particular case of the pair Euclidean / inverse-Euclidean metrics and we formalize it in the general case of two flat metrics. In Section 2.3, we show that the $\pm 1$-connections of the centered multivariate normal model are exactly the Levi-Civita connections of the Euclidean and inverse-Euclidean metrics.

\subsection{Reminder on metrics, connections and parallel transport}

On a manifold $\M$, we denote $\mc{C}^\infty(\M)$ the ring of smooth real functions and $\mf{X}(\M)$ the $\mc{C}^\infty(\M)$-module of vector fields.

\subsubsection{Connection}
A \textit{connection} is an $\R$-bilinear map $\nabla:\mf{X}(\M)\times\mf{X}(\M)\lto\mf{X}(\M)$ that is $\mc{C}^\infty(\M)$-linear in the first variable and satisfies the Leibniz rule in the second variable. It gives notions of parallelism, parallel transport and geodesics. A vector field $V$ is \textit{parallel} to the curve $\gamma$ if $\nabla_{\dot{\gamma}}V=0$. The \textit{parallel transport} of a vector $v$ along a curve $\gamma$ is the unique vector field $V_{\gamma(t)}=\Pi_\gamma^{0\TO t}v$ that extends $v$ and that is parallel to $\gamma$. Thus, the connection is an infinitesimal parallel transport, that is $\Pi_\gamma^{t\TO 0}V_{\gamma(t)}=V_{\gamma(0)}+t\nabla_{\dot{\gamma}}V+o(t)$. The \textit{geodesics} are autoparallel curves, that is curves $\gamma$ satisfying $\nabla_{\dot{\gamma}}\dot{\gamma}=0$.

\subsubsection{Levi-Civita connection}
Given a metric $g$ on a manifold $\M$, the \textit{Levi-Civita connection} is the unique torsion-free connection $\nabla^g$ compatible with the metric $g$, that is $\nabla^gg=0$ or more explicitly $X(g(Y,Z))=g(\nabla^g_XY,Z)+g(Y,\nabla^g_XZ)$ for all vector fields $X,Y,Z\in\mf{X}(\M)$. Thus a metric inherits notions of parallel transport and geodesics. Note that geodesics coincide with distance-minimizing curves with constant speed.

\subsubsection{Dual connections}
Given a metric $g$ and a connection $\nabla$, the \textit{dual connection} of $\nabla$ with respect to $g$ is the unique connection $\nabla^*$ satisfying the following equality $X(g(Y,Z))=g(\nabla_XY,Z)+g(Y,\nabla^*_XZ)$ for all vector fields $X,Y,Z\in\mf{X}(\M)$. It is characterized by Lemma \ref{dualconnections} below. In this sense, the Levi-Civita connection $\nabla^{g}$ is the unique torsion-free self-dual connection with respect to $g$. We say that $(\M,g,\nabla,\nabla^*)$ is a \textit{dually-flat manifold} when $\nabla,\nabla^*$ are dual with respect to $g$ and $\nabla$ is flat (then $\nabla^*$ is automatically flat \cite{Amari00}).

\begin{lemma}[Characterization of dual connections]\label{dualconnections}
Two connections $\nabla,\nabla'$ with parallel transports $\Pi,\Pi'$ are dual with respect to a metric $g$ if and only if the dual parallel transport preserves the metric, i.e. for all vector fields $X,Y\in\mf{X}(\M)$ and all curve $\gamma$, $g_{\gamma(t)}(X_{\gamma(t)},Y_{\gamma(t)})=g_{\gamma(0)}(\Pi_\gamma^{t\TO 0}X_{\gamma(t)},(\Pi')_\gamma^{t\TO 0}Y_{\gamma(t)})$.
\end{lemma}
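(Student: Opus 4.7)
The plan is to reduce the asserted identity to a statement about \emph{parallel fields} and then obtain each implication by a single derivative computation. First, I would note that, since $X_{\gamma(t)}$ and $Y_{\gamma(t)}$ can be arbitrary vectors of $T_{\gamma(t)}\M$, the stated equality is equivalent to the following reformulation: for every curve $\gamma$, every $\nabla$-parallel field $\tilde X$ along $\gamma$, and every $\nabla'$-parallel field $\tilde Y$ along $\gamma$, the function $t\mapsto g_{\gamma(t)}(\tilde X_t,\tilde Y_t)$ is constant. One direction of this equivalence is obtained by taking $\tilde X_s=\Pi_\gamma^{t\TO s}X_{\gamma(t)}$ and $\tilde Y_s=(\Pi')_\gamma^{t\TO s}Y_{\gamma(t)}$, the other by evaluating the constancy at an arbitrary point.

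For the forward direction, assuming $\nabla$ and $\nabla'$ are dual, I would extend $\tilde X,\tilde Y$ locally to smooth vector fields around a point of $\gamma$ and compute
\[
\tfrac{d}{dt}\,g(\tilde X,\tilde Y)=g(\nabla_{\dot\gamma}\tilde X,\tilde Y)+g(\tilde X,\nabla'_{\dot\gamma}\tilde Y)=0,
\]
both terms vanishing by parallelism. Hence the function is constant and the reformulation holds.

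For the converse, I would fix vector fields $X,Y,Z$ and a point $p\in\M$, and let $\gamma$ be an integral curve of $X$ through $p$. Setting $A(t):=\Pi_\gamma^{t\TO 0}Y_{\gamma(t)}$ and $B(t):=(\Pi')_\gamma^{t\TO 0}Z_{\gamma(t)}$, both curves in the fixed vector space $T_p\M$, the hypothesis rewrites $g(Y,Z)\circ\gamma$ as $t\mapsto g_p(A(t),B(t))$. Differentiating at $t=0$, the left-hand side gives $X(g(Y,Z))|_p$ and the right-hand side, by the product rule in $T_p\M$, gives $g_p(A'(0),Z_p)+g_p(Y_p,B'(0))$; the infinitesimal expansion $\Pi_\gamma^{t\TO 0}V_{\gamma(t)}=V_p+t\nabla_{\dot\gamma}V+o(t)$ recalled above (and its $\Pi'$-analogue) identifies $A'(0)=\nabla_XY|_p$ and $B'(0)=\nabla'_XZ|_p$, which is the duality identity at $p$; since $p$ is arbitrary, $\nabla$ and $\nabla'$ are dual.

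The main technical point to handle carefully is the extension step in the forward direction: I must check that $g(\nabla_{\dot\gamma}\tilde X,\tilde Y)+g(\tilde X,\nabla'_{\dot\gamma}\tilde Y)$ depends only on the along-curve values of $\tilde X,\tilde Y$, so that the local extensions are legitimate. This follows from the fact that $\nabla_{X_p}Y$ depends on $Y$ only in a neighborhood of $p$ together with the tensoriality of $g$; everything else reduces to the infinitesimal characterization of parallel transport quoted in the reminder.
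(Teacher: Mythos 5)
Your proof is correct, and for the converse direction --- the only direction the paper actually proves --- it is essentially the paper's argument: pick a curve through $p$ fitting $X$, transport $Y$ and $Z$ back to $T_p\M$ along it with $\Pi$ and $\Pi'$ respectively, and differentiate the metric identity at $t=0$ using the first-order expansion $\Pi_\gamma^{t\TO 0}V_{\gamma(t)}=V_p+t\nabla_{\dot{\gamma}}V+o(t)$ to recover $X(g(Y,Z))=g(\nabla_XY,Z)+g(Y,\nabla'_XZ)$. Where you go beyond the paper is the forward direction, which the paper simply cites from Amari--Nagaoka; your reformulation in terms of constancy of $t\mapsto g(\tilde X_t,\tilde Y_t)$ for a $\nabla$-parallel $\tilde X$ and a $\nabla'$-parallel $\tilde Y$ is the standard and clean way to prove it, and it makes the lemma self-contained.

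One justification in your final paragraph is slightly off, though the conclusion you need is true. To legitimize the local extensions you cannot argue merely that $\nabla_{X_p}Y$ depends on $Y$ ``only in a neighborhood of $p$'': two extensions of a field given along $\gamma$ will in general disagree on every neighborhood of every point of $\gamma$. The fact you actually need is the stronger, standard one that $\nabla_{\dot{\gamma}(t)}Y$ depends only on the restriction of $Y$ to $\gamma$ (visible from the coordinate expression $\nabla_{\dot{\gamma}}Y=\frac{d}{dt}(Y^k\circ\gamma)\,\partial_k+\dot{\gamma}^iY^j\Gamma_{ij}^k\,\partial_k$), i.e.\ the well-definedness of the covariant derivative along a curve. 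With that substitution the argument is complete; alternatively you could avoid extensions altogether by writing the duality condition in coordinates as $\partial_ig_{jk}=\Gamma_{ij,k}+\Gamma'_{ik,j}$ and verifying the along-curve product rule directly.
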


\begin{proof}
The direct sense is proved in \cite{Amari00}. Let us assume that the dual parallel transport preserves the metric and let $X,Y,Z\in\mf{X}(\M)$ be vector fields. Let $x\in\M$ and let $\gamma$ be a curve such that $\gamma(0)=x$ and $\dot{\gamma}(0)=X_x$. Using the first order approximation of the parallel transport, our assumption leads to:
\begin{align*}
    g_{\gamma(t)}(Y_{\gamma(t)},Z_{\gamma(t)}) &=g_x(\Pi_\gamma^{t\TO 0}Y_{\gamma(t)},(\Pi')_\gamma^{t\TO 0}Z_{\gamma(t)})\\
    &=g_x(Y_x+t\nabla_{\dot{\gamma}}Y+o(t),Z_x+t\nabla'_{\dot{\gamma}}Z+o(t))\\
    &=g_x(Y_x,Z_x)+t[g_x(\nabla_{\dot{\gamma}}Y,Z)+g_x(Y,\nabla'_{\dot{\gamma}}Z)]+o(t).
\end{align*}
So $X_x(g(Y,Z))=g_x(\nabla_{\dot{\gamma}}Y,Z)+g_x(Y,\nabla'_{\dot{\gamma}}Z)$ and $\nabla,\nabla'$ are dual w.r.t. $g$. $\Box$
\end{proof}

\subsection{Principle of balanced metrics}

\subsubsection{Observation}
We denote $\M={SPD}_n$ the manifold of SPD matrices and $N=\dim\M=\frac{n(n+1)}{2}$. The (A)ffine-invariant metric $g^A$ on SPD matrices \cite{pennec_riemannian_2006,Fletcher07,Lenglet06}, i.e. satisfying $g^A_{M\Sigma M^\top}(MXM^\top,MYM^\top)=g^A_\Sigma(X,Y)$ for $M\in{GL}_n$, is defined by:
\begin{equation}\label{A}
    g^A_\Sigma(X,Y) = \tr((\Sigma^{-1}X\Sigma^{-1})Y)=\tr(X(\Sigma^{-1}Y\Sigma^{-1})).
\end{equation}
The (E)uclidean metric $g^E$ on SPD matrices is the pullback metric by the embedding $id:\M\hookrightarrow(\Sym_n,\dotprod{\cdot}{\cdot}_\Frob)$:
\begin{equation}\label{E}
    g^E_\Sigma(X,Y) = \tr(XY).
\end{equation}
The (I)nverse-Euclidean metric $g^I$ on SPD matrices belongs to the family of power-Euclidean metrics \cite{Dryden10} with power $-1$. If SPD matrices are seen as covariance matrices $\Sigma$, the inverse-Euclidean metric is the Euclidean metric on precision matrices $\Sigma^{-1}$:
\begin{equation}\label{I}
    g^I_\Sigma(X,Y) = \tr(\Sigma^{-2}X\Sigma^{-2}Y) = \tr((\Sigma^{-1}X\Sigma^{-1})(\Sigma^{-1}Y\Sigma^{-1})).
\end{equation}
Observing these definitions, the affine-invariant metric (\ref{A}) appears as a \textit{balance} of the Euclidean metric (\ref{E}) and the inverse-Euclidean metric (\ref{I}). We formalize this idea thanks to parallel transport.

\subsubsection{Formalization}

The diffeomorphism $\inv:(\M,g^I)\lto(\M,g^E)$ is an isometry. Since these two metrics are flat, the parallel transports do not depend on the curve. On the one hand, the Euclidean parallel transport from $\Sigma$ to $I_n$ is the identity map $\Pi^E:X\in T_\Sigma\M\lmto X\in T_{I_n}\M$ since all tangent spaces are identified to the vector space of symmetric matrices $\Sym_n$ by the differential of the embedding $id:\M\hookrightarrow\Sym_n$. On the other hand, the isometry $\inv$ gives the inverse-Euclidean parallel transport from $\Sigma$ to $I_n$, $\Pi^{I}:X\in T_\Sigma\M\lmto\Sigma^{-1}X\Sigma^{-1}\in T_{I_n}\M$. We generalize this situation in Definition \ref{balanced}. Given Lemma \ref{dualconnections}, it automatically leads to Theorem \ref{dual}.

\begin{definition}[Balanced bilinear form]\label{balanced}
Let $g,g^*$ be two flat metrics on ${SPD}_n$ and $\Pi,\Pi^*$ the associated parallel transports that do not depend on the curve. We define the balanced bilinear form $g^0_\Sigma(X,Y)=\tr((\Pi_{\Sigma\TO I_n}X)(\Pi^*_{\Sigma\TO I_n}Y))$.
\end{definition}

\begin{theorem}[A balanced metric defines a dually flat manifold]\label{dual}
Let $g,g^*$ be two flat metrics and let $\nabla,\nabla^*$ be their Levi-Civita connections. If the balanced bilinear form $g^0$ of $g,g^*$ is a metric, then $(\M,g^0,\nabla,\nabla^*)$ is a dually flat manifold.
\end{theorem}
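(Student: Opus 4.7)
The plan is to apply Lemma \ref{dualconnections} directly. Since $\nabla$ is the Levi-Civita connection of the flat metric $g$, it is flat, and the general fact recalled in Section 2.1 guarantees that $\nabla^*$ is automatically flat as soon as the duality of $\nabla,\nabla^*$ with respect to $g^0$ is established. Thus the whole content of the theorem reduces to verifying this duality, i.e.\ that for any curve $\gamma$ with $\gamma(0)=\Sigma_0$, $\gamma(t)=\Sigma_1$, and any vector fields $X,Y\in\mf{X}(\M)$,
$$g^0_{\Sigma_1}(X_{\Sigma_1},Y_{\Sigma_1}) \;=\; g^0_{\Sigma_0}\bigl(\Pi_\gamma^{t\TO 0}X_{\Sigma_1},\,(\Pi^*)_\gamma^{t\TO 0}Y_{\Sigma_1}\bigr).$$

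The key step is to exploit flatness of $g$ and $g^*$: on the simply connected manifold ${SPD}_n$, both $\Pi$ and $\Pi^*$ are path-independent and hence enjoy the composition law
$\Pi_{\Sigma_1\TO I_n}=\Pi_{\Sigma_0\TO I_n}\circ\Pi_{\Sigma_1\TO\Sigma_0}$, and similarly for $\Pi^*$. Plugging these two factorizations into the definition $g^0_{\Sigma_1}(X,Y)=\tr((\Pi_{\Sigma_1\TO I_n}X)(\Pi^*_{\Sigma_1\TO I_n}Y))$ and recognizing the result as $g^0_{\Sigma_0}(\Pi_{\Sigma_1\TO\Sigma_0}X,\Pi^*_{\Sigma_1\TO\Sigma_0}Y)$ yields the displayed identity in one line, at which point Lemma \ref{dualconnections} delivers duality.

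I do not expect a real obstacle: the very definition of $g^0$ as a trace pairing of parallel transports to a common basepoint $I_n$ is tailored so that the dual transport preserves $g^0$ by construction. The only mildly delicate ingredient is the path-independence of parallel transport, which relies on flatness \emph{together with} simple connectedness of the domain; for ${SPD}_n$ the latter is immediate from convexity. Once the factorization through $I_n$ is noted, the duality identity is a one-line substitution, and the dually flat conclusion then follows at once from the definition recalled in Section 2.1.
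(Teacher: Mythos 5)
Your proof is correct and follows essentially the same route as the paper, which simply asserts that Definition \ref{balanced} ``automatically leads to'' Theorem \ref{dual} via Lemma \ref{dualconnections}; you have merely made explicit the composition law $\Pi_{\Sigma_1\TO I_n}=\Pi_{\Sigma_0\TO I_n}\circ\Pi_{\Sigma_1\TO\Sigma_0}$ that the paper leaves implicit. Your remark that path-independence of parallel transport needs simple connectedness in addition to flatness is a welcome (and correct) precision that the paper glosses over.
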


If two connections $\nabla$ and $\nabla^*$ are dual connections with respect to a metric $g^0$, there is no reason for them to be Levi-Civita connections of some metrics. Therefore, the main advantage of the principle of balanced metrics on the concept of dual connections seems to be the metric nature of the dual connections.

\begin{corollary}[Euclidean and inverse-Euclidean are dual with respect to affine-invariant]
We denote $\nabla^E$ and $\nabla^I$ the Levi-Civita connections of the Euclidean metric $g^E$ and the inverse-Euclidean metric $g^I$. Then $g^A$ is the balanced metric of $g^I,g^E$ and $({SPD}_n,g^A,\nabla^I,\nabla^E)$ is a dually flat manifold.
\end{corollary}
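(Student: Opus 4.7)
The strategy is to apply Theorem \ref{dual} with $g = g^I$ and $g^* = g^E$. Three things need to be checked: that $g^E$ and $g^I$ are flat, that the balanced bilinear form of the pair $(g^I,g^E)$ equals $g^A$, and that $g^A$ is actually a Riemannian metric.

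Flatness of $g^E$ is immediate since $\M = {SPD}_n$ is an open subset of $(\Sym_n,\dotprod{\cdot}{\cdot}_\Frob)$ and $g^E$ is the restriction of this flat ambient structure; its Levi-Civita parallel transport identifies every tangent space with $\Sym_n$, so $\Pi^E_{\Sigma\TO I_n}(X)=X$. For $g^I$, I would observe that the inversion $\inv:\Sigma\mapsto\Sigma^{-1}$ is a diffeomorphism of $\M$ with differential $d\inv_\Sigma(X)=-\Sigma^{-1}X\Sigma^{-1}$, and a one-line computation using (\ref{E}) and (\ref{I}) yields $\inv^*g^E=g^I$. Hence $\inv$ is an isometry $(\M,g^I)\to(\M,g^E)$, so $g^I$ is flat. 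Since $\inv$ fixes $I_n$ and $d\inv_{I_n}(X)=-X$, the fact that Levi-Civita parallel transports are intertwined by isometries gives
\[
    \Pi^I_{\Sigma\TO I_n} = (d\inv_{I_n})^{-1}\circ\Pi^E_{\Sigma^{-1}\TO I_n}\circ d\inv_\Sigma,
\]
and the two sign changes cancel, producing $\Pi^I_{\Sigma\TO I_n}(X)=\Sigma^{-1}X\Sigma^{-1}$, as announced in the text.

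Plugging these two parallel transports into Definition \ref{balanced} directly yields
\[
    g^0_\Sigma(X,Y) = \tr\bigl((\Sigma^{-1}X\Sigma^{-1})\,Y\bigr),
\]
which matches $g^A_\Sigma(X,Y)$ by (\ref{A}). Symmetry of $g^A$ is clear from the second form in (\ref{A}), and positive definiteness follows from $g^A_\Sigma(X,X)=\tr\bigl((\Sigma^{-1/2}X\Sigma^{-1/2})^2\bigr)\gs 0$ with equality iff $X=0$, so $g^A$ is a genuine Riemannian metric. Theorem \ref{dual} then immediately implies that $\nabla^I$ and $\nabla^E$ are dual with respect to $g^A$ and that $({SPD}_n,g^A,\nabla^I,\nabla^E)$ is a dually flat manifold.

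The entire argument is essentially bookkeeping; the only substantive computation is the pullback identity $\inv^*g^E=g^I$ (equivalently, the fact that $\inv$ is an isometry between $g^I$ and $g^E$), after which the match with (\ref{A}) is automatic and Theorem \ref{dual} does the heavy lifting.
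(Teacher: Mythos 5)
Your proof is correct and follows essentially the same route as the paper: identify the two flat parallel transports ($\Pi^E=\Id$ and $\Pi^I:X\mapsto\Sigma^{-1}X\Sigma^{-1}$ via the isometry $\inv$), match the resulting balanced bilinear form with formula (\ref{A}), note that $g^A$ is indeed a metric, and invoke Theorem \ref{dual}. The only difference is that you spell out the positive-definiteness of $g^A$ and the isometry-conjugation formula for the parallel transport, which the paper leaves implicit.
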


\subsection{Relation with Fisher information geometry}

We know from \cite{Skovgaard84} that the affine-invariant metric is the Fisher metric of the centered multivariate normal model. Information geometry provides a natural one-parameter family of dual connections, called $\alpha$-connections \cite{Amari00}. In the following table, we recall the main quantities characterizing the \textit{centered multivariate normal model} $\mc{P}=\{p_\Sigma:\R^n\lto\R_+^*,\Sigma\in\M\}$, where $\M={SPD}_n$.
$$\begin{array}{cc}
    \mathrm{Densities} & ~~p_\Sigma(x)=\frac{1}{{\sqrt{2\pi}}^n}\frac{1}{\sqrt{\det\Sigma}}\exp\left(\frac{1}{2}x^\top\Sigma^{-1}x\right) \\[1.5mm]
    \mathrm{Log~likelihood} & ~~l_\Sigma(x)=\log p_\Sigma(x)=\frac{1}{2}\left(-n\log(2\pi)-\log\det\Sigma+x^\top\Sigma^{-1}x\right)\\[1.5mm]
    \mathrm{Differential} & ~~d_\Sigma l(V)(x)=-\frac{1}{2}\left[\tr(\Sigma^{-1}V)+x^\top\Sigma^{-1}V\Sigma^{-1}x\right]\\[1.5mm]
    \mathrm{Fisher~metric} & ~~g_\Sigma(V,W)=\frac{1}{2}\tr(\Sigma^{-1}V\Sigma^{-1}W)
\end{array}$$
We recall that the \textit{$\alpha$-connections} $\nabla^{(\alpha)}$ \cite{Amari00} of a family of densities $\mc{P}$ are defined by their Christoffel symbols $\Gamma_{ijk}=g_{lk}\Gamma_{ij}^l$ in the local basis $(\partial_i)_{1\ls i\ls N}$ at $\Sigma\in\M$:
\begin{equation}
    \left(\Gamma_{ijk}^{(\alpha)}\right)_\Sigma=\E_\Sigma\left[\left(\partial_i\partial_j l+\frac{1-\alpha}{2}\partial_i l\partial_j l\right)\partial_k l\right].
\end{equation}

We give in Theorem \ref{alpha} the expression of the $\alpha$-connections of the centered multivariate normal model and we notice that the Euclidean and inverse-Euclidean Levi-Civita connections belong to this family.

\begin{theorem}[$\alpha$-connections of the centered multivariate normal model]\label{alpha}
In the global basis of $\M={SPD}_n$ given by the inclusion $\M\hookrightarrow\Sym(n)\simeq\R^N$, writing $\partial_XY=X^i(\partial_i Y^j)\partial_j$, the $\alpha$-connections of the multivariate centered normal model are given by the following formula:
\begin{equation}\label{alphaconnection}
    \nabla^{(\alpha)}_XY=\partial_XY-\frac{1+\alpha}{2}(X\Sigma^{-1}Y+Y\Sigma^{-1}X).
\end{equation}
The mixture $m$-connection ($\alpha=-1$) is the Levi-Civita connection of the Euclidean metric $g^E_\Sigma(X,Y)=\tr(XY)$. The exponential $e$-connection ($\alpha=1$) is the Levi-Civita connection of the inverse-Euclidean metric, i.e. the pullback of the Euclidean metric by matrix inversion, $g^I_\Sigma(X,Y)=\tr(\Sigma^{-2}X\Sigma^{-2}Y)$.
\end{theorem}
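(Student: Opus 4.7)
The plan is to compute the $\alpha$-Christoffel symbols directly from their defining expectation formula, then invert the Fisher metric to read off $\nabla^{(\alpha)}$, and finally identify the two distinguished cases via an isometry pullback. I would first differentiate the log-likelihood $l_\Sigma$ twice in directions $V, W \in \Sym_n$ using the identities $d\log\det\Sigma[V] = \tr(\Sigma^{-1}V)$ and $d\Sigma^{-1}[V] = -\Sigma^{-1}V\Sigma^{-1}$. Setting $A_V := \Sigma^{-1}V\Sigma^{-1}$ and $y(A) := x^\top A x - \tr(A\Sigma)$, the score is the centered quadratic form $d_\Sigma l(V)(x) = \tfrac{1}{2}y(A_V)$, while the Hessian is the sum of $-\tfrac{1}{2}y(B)$ with $B = \Sigma^{-1}V\Sigma^{-1}W\Sigma^{-1} + \Sigma^{-1}W\Sigma^{-1}V\Sigma^{-1}$ and a deterministic term that drops out after multiplying by $dl(U)$ and taking expectation.

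Next, I would compute the two relevant expectations using the Gaussian cumulants of centered quadratic forms: $\E_\Sigma[y(A)] = 0$, $\E_\Sigma[y(A)y(B)] = 2\tr(A\Sigma B\Sigma)$, and the third cumulant $\E_\Sigma[y(A)y(B)y(C)] = 8\tr(A\Sigma B\Sigma C\Sigma)$ for symmetric arguments (Isserlis/Wick). The $\Sigma$'s inside $A_V, A_W, A_U$ telescope, and using that for symmetric $V, W, U, \Sigma$ the scalar $\tau(V,W,U) := \tr(\Sigma^{-1}V\Sigma^{-1}W\Sigma^{-1}U)$ is fully symmetric in $(V, W, U)$ (transpose plus cyclicity), both expectations reduce to scalar multiples of $\tau(V,W,U)$; combining them via the defining formula yields $\Gamma^{(\alpha)}_{V,W,U} = -\tfrac{1+\alpha}{2}\,\tau(V,W,U)$. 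Since the Fisher metric is $g_\Sigma(X,U) = \tfrac{1}{2}\tr(\Sigma^{-1}X\Sigma^{-1}U)$, the requirement $g_\Sigma(\nabla^{(\alpha)}_VW, U) = \Gamma^{(\alpha)}_{V,W,U}$ for all symmetric $U$ forces $\Sigma^{-1}\nabla^{(\alpha)}_VW\Sigma^{-1}$ to equal the symmetric part of $-(1+\alpha)\,\Sigma^{-1}V\Sigma^{-1}W\Sigma^{-1}$; conjugating by $\Sigma$ gives $\nabla^{(\alpha)}_VW = -\tfrac{1+\alpha}{2}(V\Sigma^{-1}W + W\Sigma^{-1}V)$ on the constant basis fields, and tensorial extension adds $\partial_XY$ to produce the formula of the theorem.

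For the two special cases, $\alpha = -1$ gives $\nabla^{(-1)}_XY = \partial_XY$, which is the Levi-Civita connection of $g^E$ because $g^E$ is flat in the ambient chart $\Sym_n \simeq \R^N$. For $\alpha = 1$, the map $\inv:(\M, g^I) \to (\M, g^E)$ is an isometry by the very definition of $g^I$ as the pullback of $g^E$ by $\inv$, so the Levi-Civita connection of $g^I$ equals the $\inv$-pullback of the flat connection $\partial$; a short computation of this pullback using $d\inv_\Sigma(X) = -\Sigma^{-1}X\Sigma^{-1}$ produces exactly $\partial_XY - X\Sigma^{-1}Y - Y\Sigma^{-1}X$, matching $\nabla^{(1)}$. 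The main obstacle is the arithmetic bookkeeping in the Gaussian moment step: the several factors of $1/2$ (from the Fisher metric, the score, and the definition of $y$) and the full symmetry of $\tau$ must conspire to yield the clean coefficient $-\tfrac{1+\alpha}{2}$, and it is here that a miscount would easily produce a wrong constant.
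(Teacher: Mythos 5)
Your proof is correct and follows the same overall skeleton as the paper's (compute $\Gamma^{(\alpha)}_{ijk}$ from the defining expectation, obtain $-\frac{1+\alpha}{2}\tr(\Sigma^{-1}X\Sigma^{-1}Y\Sigma^{-1}Z)$, then invert the Fisher metric to read off the connection), but the two computational ingredients are handled by genuinely different and cleaner means. For the Gaussian expectations, the paper works with raw moments of uncentered quadratic forms (its Lemma~\ref{expectations}), whose third identity $\E(y^\top Ay\,y^\top By\,y^\top Cy)=\tr A\tr B\tr C+8\tr(ABC)+2(\dots)$ requires a page of case analysis and index bookkeeping; you instead use the mixed cumulants of the \emph{centered} forms $y(A)=x^\top Ax-\tr(A\Sigma)$, namely $\E[y(A)y(B)]=2\tr(A\Sigma B\Sigma)$ and $\E[y(A)y(B)y(C)]=8\tr(A\Sigma B\Sigma C\Sigma)$, so that all the trace-product terms of the paper's lemma never appear and the deterministic part of the Hessian is killed by $\E[y(A_U)]=0$. (I checked: expanding the centered third moment against the paper's raw-moment formula does give exactly $8\tr(ABC)$, and your coefficients $-\tau$ and $+\tau$ for the two Christoffel ingredients, hence $-\tau+\frac{1-\alpha}{2}\tau=-\frac{1+\alpha}{2}\tau$, agree with the paper.) For the $e$-connection, the paper verifies $\nabla^I_XY=\partial_XY-(X\Sigma^{-1}Y+Y\Sigma^{-1}X)$ via the Koszul formula, whereas you obtain it as the pullback of the flat connection by the isometry $\inv$ using $d\inv_\Sigma(X)=-\Sigma^{-1}X\Sigma^{-1}$; this computation indeed yields the stated formula and is the more conceptual route, since it makes the flatness of $\nabla^{(1)}$ manifest. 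The metric-inversion step (symmetrizing $-(1+\alpha)\Sigma^{-1}V\Sigma^{-1}W\Sigma^{-1}$ because the pairing $\tr(MU)$ over symmetric $U$ only sees the symmetric part of $M$) is also stated correctly, with the two factors of $\tfrac12$ from the Fisher metric and the symmetrization cancelling as they should.
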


The formula (\ref{alphaconnection}) can be proved thanks to Lemma \ref{expectations} which gives the results of expressions of type $\int_{\R^n}{x^\top\Sigma^{-1}X\Sigma^{-1}Y\Sigma^{-1}Z\Sigma^{-1}x\exp\left(-\frac{1}{2}x^\top\Sigma^{-1}x\right)dx}$, with $y=\Sigma^{-1/2}x$, $A=\Sigma^{-1/2}X\Sigma^{-1/2}$, $B=\Sigma^{-1/2}Y\Sigma^{-1/2}$ and $C=\Sigma^{-1/2}Z\Sigma^{-1/2}$. If one wants to avoid using the third formula of Lemma \ref{expectations}, one can rely on the formula (\ref{alphaconnection}) in the case $\alpha=0$ which is already known from \cite{Skovgaard84}.

\begin{lemma}\label{expectations}
For $A,B,C\in\Sym_n$:
\begin{align*}
    \E_{I_n}(y\lmto y^\top Ay) &=\tr(A),\\
    \E_{I_n}(y\lmto y^\top Ayy^\top By) &=\tr(A)\tr(B)+2\tr(AB),\\
    \E_{I_n}(y\lmto y^\top Ayy^\top Byy^\top Cy) &=\tr(A)\tr(B)\tr(C)+8\tr(ABC)\\
    & \quad +2(\tr(AB)\tr(C)+\tr(BC)\tr(A)+\tr(CA)\tr(B)).
\end{align*}
\end{lemma}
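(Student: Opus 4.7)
The three identities are moments of quadratic forms in the standard Gaussian vector $y\sim\mc{N}(0,I_n)$, so the natural tool is \emph{Isserlis' theorem} (Wick's formula): odd moments vanish and
$$\E[y_{i_1}y_{i_2}\cdots y_{i_{2k}}]=\sum_{\pi}\prod_{\{a,b\}\in\pi}\delta_{i_ai_b},$$
where $\pi$ ranges over the $(2k-1)!!$ perfect matchings of $\{1,\dots,2k\}$. The first identity is then immediate: $\E[y^\top Ay]=\sum_{ij}A_{ij}\delta_{ij}=\tr(A)$.

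For the second, I would expand $(y^\top Ay)(y^\top By)=\sum_{ijkl}A_{ij}B_{kl}\,y_iy_jy_ky_l$ and sum over the three matchings of $\{i,j,k,l\}$: the \emph{diagonal} matching $(ij)(kl)$ gives $\tr(A)\tr(B)$, while the crossed matchings $(ik)(jl)$ and $(il)(jk)$ each give $\tr(AB)$, after using the symmetry of $A$ and $B$ to identify $\sum_{ij}A_{ij}B_{ji}$ with $\tr(AB)$.

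The third identity is the genuine combinatorial step: one expands against $A_{ij}B_{kl}C_{mn}$ and sums over the $15$ perfect matchings of $(i,j,k,l,m,n)$. I would classify them by the number of pairs drawn from $\{(ij),(kl),(mn)\}$, which I call \emph{diagonal}. The unique all-diagonal matching contributes $\tr(A)\tr(B)\tr(C)$; the six matchings with exactly one diagonal pair split into three groups of two according to which pair is diagonal and contribute $2[\tr(A)\tr(BC)+\tr(B)\tr(CA)+\tr(C)\tr(AB)]$; the remaining eight matchings each collapse to $\tr(ABC)$, using the fact that for symmetric $A,B,C$ every ordering of the three matrices has the same trace (transpose inside the trace). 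Summing yields the announced formula.

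The only real obstacle is the bookkeeping of the $15$ matchings and the verification that all eight ``fully crossed'' ones really do collapse to the same trace $\tr(ABC)$; once they are listed systematically and the symmetry of $A,B,C$ is applied, the multiplicities $1$, $2+2+2$ and $8$ drop out directly. As the paragraph preceding the lemma already notes, one can even sidestep this step entirely by proving only the first two formulas and combining them with the known $\alpha=0$ case of (\ref{alphaconnection}) from \cite{Skovgaard84}.
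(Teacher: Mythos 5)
Your proof is correct, and it takes a genuinely different route from the paper's. The paper works in an eigenbasis: it writes $A=PA'P^\top$, $B=QB'Q^\top$, $C=RC'R^\top$, reduces the integrand to $\left(\sum_i a_iz_i^2\right)\left(\sum_j b_j(Uz)_j^2\right)\left(\sum_k c_k(Vz)_k^2\right)$ with $U=Q^\top P$, $V=R^\top P$, and then enumerates by hand the eleven patterns of index coincidence for which the product of univariate moments $\E(X^2)=1$, $\E(X^4)=3$, $\E(X^6)=15$ is nonzero, followed by a delicate rearrangement in which several partial sums must be ``completed'' by adding and subtracting the all-indices-equal term before they can be recognized as traces. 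Your Wick--Isserlis computation works directly with the coordinates $A_{ij}B_{kl}C_{mn}$ and replaces that case analysis by the cleaner classification of the $15$ perfect matchings into $1+6+8$ according to the number of diagonal pairs; the multiplicities then read off immediately, and the only point requiring care --- that all eight fully crossed matchings give the same value $\tr(ABC)$ --- is correctly disposed of by the symmetry of $A,B,C$ together with $\tr(M)=\tr(M^\top)$ and cyclicity of the trace. What the paper's route buys is self-containedness (only the scalar Gaussian moments are used, no multivariate moment formula is invoked); what yours buys is transparency: the coefficients $1$, $2$ and $8$ are visibly matching counts, and the bookkeeping that occupies most of the appendix disappears. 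If you write yours up, state Isserlis' theorem explicitly or cite it, and display one representative fully crossed matching, e.g. $(ik)(jm)(ln)$, which gives $\sum_{i,j,l}A_{ij}B_{il}C_{jl}=\sum_{j,l}(AB)_{jl}C_{lj}=\tr(ABC)$ using $A_{ij}=A_{ji}$ and $C_{jl}=C_{lj}$, so that the reader can check the collapse of the remaining seven by the same mechanism.
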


\begin{proof}[Theorem \ref{alpha}]
Given Lemma \ref{expectations}, the computation of the Christoffel symbols $\left(\Gamma^{(\alpha)}_{ijk}\right)_\Sigma$ leads to $\left(\Gamma^{(\alpha)}_{ijk}\right)_\Sigma X^iY^jZ^k=-\frac{1+\alpha}{4}\tr(\Sigma^{-1}[X\Sigma^{-1}Y+Y\Sigma^{-1}X]\Sigma^{-1}Z)$.
On the other hand, the relation $\Gamma_{ijk}=g_{lk}\Gamma_{ij}^l$ between Christoffel symbols gives $\left(\Gamma^{(\alpha)}_{ijk}\right)_\Sigma X^iY^jZ^k=\frac{1}{2}\tr\left(\Sigma^{-1}\left[\left(\Gamma_{ij}^l\right)_\Sigma^{(\alpha)} X^iY^j\partial_l\right]\Sigma^{-1}Z\right)$.
So we get:
\begin{equation}
    \nabla^{(\alpha)}_XY=\partial_XY+\left[\left(\Gamma_{ij}^l\right)_\Sigma^{(\alpha)} X^iY^j\partial_l\right]=\partial_XY-\frac{1+\alpha}{2}(X\Sigma^{-1}Y+Y\Sigma^{-1}X).
\end{equation}

It is clear that the mixture connection ($\alpha=-1$) is the Euclidean connection. The inverse-Euclidean connection can be computed thanks to the Koszul formula. This calculus drives exactly to the exponential connection ($\alpha=1$). $\Box$
\end{proof}

In the next section, we apply the principle of balanced metrics to the pairs Euclidean / log-Euclidean (Bogoliubov-Kubo-Mori metric) and power-Euclidean / power-Euclidean (mixed-power-Euclidean metrics).

\section{The family of mixed-power-Euclidean metrics}

\subsection{Bogoliubov-Kubo-Mori metric}

The Bogoliubov-Kubo-Mori metric $g^{BKM}$ is a metric on symmetric positive definite matrices used in quantum physics. It was introduced as $g_\Sigma^{BKM}(X,Y)=\int_0^\infty{\tr((\Sigma+tI_n)^{-1}X(\Sigma+tI_n)^{-1}Y)dt}$ and can be rewritten \cite{Michor00} thanks to the differential of the symmetric matrix logarithm $\log:\M={SPD}_n\lto\Sym_n$ as:
\begin{equation}\label{BKM}
    g_\Sigma^{BKM}(X,Y)=\tr(\partial_X\log(\Sigma)\,Y)=\tr(X\,\partial_Y\log(\Sigma)).
\end{equation}
The log-Euclidean metric $g^{LE}$ \cite{Arsigny06} is the pullback metric of the Euclidean metric by the symmetric matrix logarithm $\log:(\M,g^{LE})\lto(\Sym_n,g^E)$:
\begin{equation}\label{LE}
    g^{LE}_\Sigma(X,Y)=\tr(\partial_X\log(\Sigma)\,\partial_Y\log(\Sigma)).
\end{equation}
Therefore, the BKM metric (\ref{BKM}) appears as the balanced metric of the Euclidean metric (\ref{E}) and the log-Euclidean metric (\ref{LE}). As the Euclidean and log-Euclidean metrics are flat, the parallel transport does not depend on the curve and Theorem \ref{dual} ensures that they form a dually flat manifold.

\begin{corollary}[Euclidean and log-Euclidean are dual with respect to BKM]\label{dualBKM}
We denote $\nabla^{E}$ and $\nabla^{LE}$ the Levi-Civita connections of the Euclidean metric $g^E$ and the log-Euclidean metric $g^{LE}$. Then $g^{BKM}$ is the balanced metric of $g^{LE},g^E$ and $({SPD}_n,g^{BKM},\nabla^{LE},\nabla^E)$ is a dually flat manifold.
\end{corollary}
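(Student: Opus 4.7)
The plan is to mirror the argument that worked for the pair (Euclidean, inverse-Euclidean): identify the two parallel transports from $\Sigma$ to $I_n$, substitute them into Definition \ref{balanced}, recognize the result as $g^{BKM}$, and conclude by Theorem \ref{dual}. The flatness of $g^E$ and $g^{LE}$ is already recalled in the excerpt, so both parallel transports are well-defined independently of the path.

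First, since $g^E$ is the restriction of $\dotprod{\cdot}{\cdot}_\Frob$ through the embedding $\M \hookrightarrow \Sym_n$, all tangent spaces are canonically identified with $\Sym_n$ and $\Pi^E_{\Sigma \TO I_n}(X) = X$. Next, by definition of $g^{LE}$, the symmetric matrix logarithm $\log : (\M, g^{LE}) \lto (\Sym_n, \dotprod{\cdot}{\cdot}_\Frob)$ is an isometry, so its differential conjugates the trivial parallel transport on $\Sym_n$ into the log-Euclidean parallel transport on $\M$. Because $d_{I_n}\log = \Id$, this yields $\Pi^{LE}_{\Sigma \TO I_n}(X) = \partial_X \log(\Sigma)$.

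Substituting into Definition \ref{balanced} with $g = g^{LE}$ and $g^* = g^E$ produces $\tr(\partial_X \log(\Sigma)\,Y)$, which is exactly $g^{BKM}_\Sigma(X,Y)$ by (\ref{BKM}). To apply Theorem \ref{dual} one must still verify that this balanced bilinear form is a genuine metric: symmetry is evident from the two equivalent expressions in (\ref{BKM}), and positive definiteness follows from the integral representation $g^{BKM}_\Sigma(X,X) = \int_0^\infty \|(\Sigma+tI_n)^{-1/2}X(\Sigma+tI_n)^{-1/2}\|_\Frob^2\,dt$, which is strictly positive whenever $X \neq 0$. Theorem \ref{dual} then delivers the dually flat structure $({SPD}_n, g^{BKM}, \nabla^{LE}, \nabla^E)$.

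The only step that is not a direct substitution or an appeal to a prior result is the identification of the log-Euclidean parallel transport, so this is where I expect the mild difficulty to lie. It is tame here only because $\log : \M \to \Sym_n$ is a \emph{global} diffeomorphism and its differential at $I_n$ is the identity, so no residual conjugation obscures the final formula; without these two features one would have to carry around an extra factor $(d_{I_n}\log)^{-1}$ and the balanced bilinear form would no longer match (\ref{BKM}) on the nose.
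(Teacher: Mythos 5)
Your proof is correct and follows essentially the same route as the paper: identify $\Pi^E_{\Sigma\TO I_n}=\Id$ and $\Pi^{LE}_{\Sigma\TO I_n}=\partial_{(\cdot)}\log(\Sigma)$, substitute into Definition~\ref{balanced} to recover (\ref{BKM}), and invoke Theorem~\ref{dual}. The only addition is your explicit verification of positive definiteness via the integral representation, which the paper leaves implicit by treating BKM as an already-known metric.
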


\subsection{Mixed-power-Euclidean}

Up to now, we observed that existing metrics (affine-invariant and BKM) were the balanced metrics of pairs of flat metrics (Euclidean / inverse-Euclidean and Euclidean / log-Euclidean). Thus, the symmetry and the positivity of the balanced bilinear forms were obvious. From now on, we build new bilinear forms thanks to the principle of balanced metrics. Therefore, it is not as obvious as before that these bilinear forms are metrics.

The family of power-Euclidean metrics $g^{E,\theta}$ \cite{Dryden10} indexed by the power $\theta\ne 0$ is defined by pullback of the Euclidean metric by the power function $\pow_\theta=\exp\circ\,\theta\log:(\M,\theta^2g^{E,\theta})\lto(\M,g^E)$:
\begin{equation}
    g^{E,\theta}_\Sigma(X,Y)=\frac{1}{\theta^2}\tr(\partial_X\pow_\theta(\Sigma)\,\partial_Y\pow_\theta(\Sigma)).
\end{equation}
This family comprise the Euclidean metric for $\theta=1$ and tends to the log-Euclidean metric when the power $\theta$ goes to 0. Therefore, we abusively consider that the log-Euclidean metric belongs to the family and we denote it $g^{E,0}:=g^{LE}$.

We define the mixed-power-Euclidean metrics $g^{E,\theta_1,\theta_2}$ as the balanced bilinear form of the power-Euclidean metrics $g^{E,\theta_1}$ and $g^{E,\theta_2}$, where $\theta_1,\theta_2\in\R$:
\begin{equation}
    g^{E,\theta_1,\theta_2}_\Sigma(X,Y)=\frac{1}{\theta_1\theta_2}\tr(\partial_X\pow_{\theta_1}(\Sigma)\,\partial_Y\pow_{\theta_2}(\Sigma)).
\end{equation}

Note that the family of mixed-power-Euclidean metrics contains the BKM metric for $(\theta_1,\theta_2)=(1,0)$ and the $\theta$-power-Euclidean metric for $(\theta_1,\theta_2)=(\theta,\theta)$, including the Euclidean metric for $\theta=1$ and the log-Euclidean metric for $\theta=0$.

At this stage, we do not know that the bilinear form $g^{E,\theta_1,\theta_2}$ is a metric. This is stated by Theorem \ref{MPE}. As the power-Euclidean metrics are flat, Theorem \ref{dual} combined with Theorem \ref{MPE} ensure that the Levi-Civita connections $\nabla^{E,\theta_1}$ and $\nabla^{E,\theta_2}$ of the metrics $g^{E,\theta_1}$ and $g^{E,\theta_2}$ are dual with respect to the $(\theta_1,\theta_2)$-mixed-power-Euclidean metric. This is stated by Corollary \ref{dualMPE}.

\begin{theorem}\label{MPE}
The bilinear form $g^{E,\theta_1,\theta_2}$ is symmetric and positive definite so it is a metric on ${SPD}_n$. Moreover, the symmetry ensures that $g^{E,\theta_1,\theta_2}=g^{E,\theta_2,\theta_1}$.
\end{theorem}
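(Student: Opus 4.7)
The plan is to diagonalize $\Sigma$ and use the Daletskii--Krein formula for the differential of a symmetric matrix function, which collapses both parts of the theorem into an elementary observation about divided differences. Fix $\Sigma\in{SPD}_n$ and write $\Sigma=P\diag(\lambda_1,\ldots,\lambda_n)P^\top$ with $P\in O(n)$ and $\lambda_i>0$. The Daletskii--Krein formula expresses the differential of $\pow_\theta$ in the spectral basis as $\partial_X\pow_\theta(\Sigma)=P\bigl[c_{ij}(\theta)\,\tilde X_{ij}\bigr]_{ij}P^\top$, where $\tilde X:=P^\top X P$ and
$$c_{ij}(\theta)=\frac{\lambda_i^\theta-\lambda_j^\theta}{\lambda_i-\lambda_j}\;(i\ne j),\qquad c_{ii}(\theta)=\theta\,\lambda_i^{\theta-1}.$$
Using cyclicity of the trace, orthogonal invariance, and the symmetry of $\tilde Y$ together with the identity $c_{ij}=c_{ji}$, I would obtain the closed form
$$g^{E,\theta_1,\theta_2}_\Sigma(X,Y)=\sum_{i,j=1}^n w_{ij}\,\tilde X_{ij}\,\tilde Y_{ij},\qquad w_{ij}:=\frac{c_{ij}(\theta_1)\,c_{ij}(\theta_2)}{\theta_1\theta_2}.$$

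From this expression, both symmetry statements follow with no further work. The bilinear form is symmetric in $(X,Y)$ because $\tilde X_{ij}\tilde Y_{ij}$ is; and $g^{E,\theta_1,\theta_2}=g^{E,\theta_2,\theta_1}$ is immediate from the symmetric role of $(\theta_1,\theta_2)$ in $w_{ij}$. The boundary case $\theta=0$ is handled by the limit $\lim_{\theta\to 0}c_{ij}(\theta)/\theta=(\log\lambda_i-\log\lambda_j)/(\lambda_i-\lambda_j)$ (and $1/\lambda_i$ on the diagonal), which reproduces the log-Euclidean ingredient and keeps all weights $w_{ij}$ finite and well-defined.

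For positive definiteness the crucial observation is that for every real $\theta$ the function $t\mapsto t^\theta/\theta$ (or $\log t$ when $\theta=0$) is strictly increasing on $(0,\infty)$, hence its divided difference $c_{ij}(\theta)/\theta$ is strictly positive for all $\lambda_i,\lambda_j>0$ and all $i,j$. Consequently $w_{ij}>0$ for every pair $(i,j)$, and
$$g^{E,\theta_1,\theta_2}_\Sigma(X,X)=\sum_{i,j}w_{ij}\,\tilde X_{ij}^2\;\geq\;0,$$
with equality iff $\tilde X=0$ iff $X=0$, yielding definiteness.

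The only part requiring genuine work is deriving (or invoking) the Daletskii--Krein closed form for $\partial_X\pow_\theta(\Sigma)$ and making sure the expression behaves correctly at coincident eigenvalues and in the $\theta\to 0$ limit. Once the factorization $w_{ij}=(c_{ij}(\theta_1)/\theta_1)\cdot(c_{ij}(\theta_2)/\theta_2)$ is in hand, symmetry and strict positivity both reduce to the trivial fact that power functions are strictly monotone on $(0,\infty)$.
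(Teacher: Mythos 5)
Your proposal is correct and follows essentially the same route as the paper: the Daletskii--Krein formula you invoke is exactly the paper's Lemma~\ref{diff} (the Loewner/divided-difference expression for $\partial_X\pow_\theta(\Sigma)$ in the eigenbasis of $\Sigma$), and your weighted sum $\sum_{i,j}w_{ij}\tilde X_{ij}\tilde Y_{ij}$ with $w_{ij}=c_{ij}(\theta_1)c_{ij}(\theta_2)/(\theta_1\theta_2)>0$ is precisely the paper's Hadamard-product identity $\tr((A\bullet P^\top XP)(A\bullet P^\top YP))$ with $A(i,j)=\sqrt{w_{ij}}$. The positivity of the weights via strict monotonicity of $t\mapsto t^\theta/\theta$ (and $\log t$ at $\theta=0$) matches the paper's observation that $\frac{1}{\theta}\delta(\pow_\theta,D)(i,j)>0$, so nothing further is needed.
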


\begin{corollary}[$\theta_1$ and $\theta_2$-power-Euclidean are dual with respect to $(\theta_1,\theta_2)$-mixed-power-Euclidean]\label{dualMPE}
For $\theta_1,\theta_2\in\R$, we denote $\nabla^{E,\theta_1}$ and $\nabla^{E,\theta_2}$ the Levi-Civita connections of the power-Euclidean metrics $g^{E,\theta_1}$ and $g^{E,\theta_2}$. Then $g^{E,\theta_1,\theta_2}$ is the balanced metric of $g^{E,\theta_1},g^{E,\theta_2}$ and $({SPD}_n,g^{E,\theta_1,\theta_2},\nabla^{E,\theta_1},\nabla^{E,\theta_2})$ is a dually flat manifold.
\end{corollary}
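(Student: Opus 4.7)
The plan is to reduce this corollary to a direct application of Theorem \ref{dual}, once the ad hoc formula defining $g^{E,\theta_1,\theta_2}$ has been identified with the balanced bilinear form of Definition \ref{balanced}. The essential content lies in Theorem \ref{MPE}; what remains is bookkeeping about parallel transports.

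The first step is to identify, for each $\theta\in\R$, the Levi-Civita parallel transport of $g^{E,\theta}$ from $\Sigma$ to $I_n$. For $\theta\ne 0$, the map $\pow_\theta:(\M,\theta^2 g^{E,\theta})\lto(\M,g^E)$ is an isometry that fixes $I_n$ and sends $\Sigma$ to $\Sigma^\theta$, while the Euclidean parallel transport is trivial in the global $\Sym_n$-trivialization. Since $d_{I_n}\pow_\theta=\theta\,\Id_{\Sym_n}$ (from $\pow_\theta(I_n+tV)=I_n+\theta t V+o(t)$), invariance of parallel transport under isometry forces
\[
    \Pi^{E,\theta}_{\Sigma\TO I_n}X=\tfrac{1}{\theta}\,d_X\pow_\theta(\Sigma).
\]
For $\theta=0$ I would run the same argument with the isometry $\log:(\M,g^{LE})\lto(\Sym_n,g^E)$, whose differential at $I_n$ is $\Id_{\Sym_n}$, obtaining $\Pi^{E,0}_{\Sigma\TO I_n}X=d_X\log(\Sigma)$; this is consistent with the formal limit $\theta\to 0$ of the previous formula.

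Substituting these parallel transports into Definition \ref{balanced}, the balanced bilinear form of $g^{E,\theta_1}$ and $g^{E,\theta_2}$ becomes $\tfrac{1}{\theta_1\theta_2}\tr(d_X\pow_{\theta_1}(\Sigma)\,d_Y\pow_{\theta_2}(\Sigma))$, which is precisely $g^{E,\theta_1,\theta_2}_\Sigma(X,Y)$. Theorem \ref{MPE} then promotes this bilinear form to an honest metric. Since each $g^{E,\theta_i}$ is flat, being the pullback of the Euclidean metric on $\Sym_n$ by the diffeomorphism $\pow_{\theta_i}$ (or $\log$ when $\theta_i=0$), Theorem \ref{dual} applies verbatim and delivers the dually flat structure $({SPD}_n,g^{E,\theta_1,\theta_2},\nabla^{E,\theta_1},\nabla^{E,\theta_2})$.

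The only point that requires care is the factor $1/\theta$ in the parallel transport, arising from $d_{I_n}\pow_\theta=\theta\,\Id$; this is exactly what produces the $1/(\theta_1\theta_2)$ normalization built into the definition of $g^{E,\theta_1,\theta_2}$. Once that match is verified, the corollary is a one-line combination of Theorems \ref{MPE} and \ref{dual}, so the genuine difficulty is not here but in Theorem \ref{MPE} itself.
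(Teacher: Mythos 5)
Your proposal is correct and follows the paper's own route: the paper treats this corollary as an immediate consequence of Theorem \ref{MPE} (which makes $g^{E,\theta_1,\theta_2}$ a metric) and Theorem \ref{dual} (which gives the dually flat structure from two flat metrics), exactly as you do. The one detail you add explicitly --- that $\Pi^{E,\theta}_{\Sigma\TO I_n}X=\frac{1}{\theta}\partial_X\pow_\theta(\Sigma)$, so that the defining formula of $g^{E,\theta_1,\theta_2}$ really is the balanced bilinear form of Definition \ref{balanced} --- is left implicit in the paper, and your verification of it (including the $\theta=0$ case via $\log$) is accurate.
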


To prove Theorem \ref{MPE}, we show that for all spectral decomposition $\Sigma=PDP^\top$ of an SPD matrix, there exists a matrix $A$ with positive coefficients $A(i,j)>0$ such that $g_\Sigma^{E,\theta_1,\theta_2}(X,Y)=\tr((A\bullet P^\top XP)(A\bullet P^\top YP))$, where $\bullet$ is the Hadamard product, i.e. $(A\bullet B)(i,j)=A(i,j)B(i,j)$, which is associative, commutative, distributive w.r.t. matrix addition and satisfies $\tr((A\bullet B)C)=\tr(B(A\bullet C))$ for symmetric matrices $A,B,C\in\Sym_n$. The existence of $A$ relies on Lemma \ref{diff}.

\begin{lemma}\label{diff}
Let $\Sigma=PDP^\top$ be a spectral decomposition of $\Sigma\in\M$, with $P\in O(n)$ and $D$ diagonal. For $f\in\{\exp,\log,\pow_\theta\}$, $\partial_Vf(\Sigma)=P(\delta(f,D)\bullet P^\top VP)P^\top$ where $\delta(f,D)(i,j)=\frac{f(d_i)-f(d_j)}{d_i-d_j}$. Note that $\frac{1}{\theta}\delta(\pow_\theta,D)(i,j)>0$ for all $\theta\in\R^*$ and $\delta(\log,D)(i,j)>0$.
\end{lemma}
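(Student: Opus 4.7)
The plan is to establish the differentiation formula for the three functions in succession---first for monomials, then for $\exp$ by its power series, then for $\log$ and $\pow_\theta$ via composition with $\exp$---and to deduce positivity of $\delta$ from monotonicity of the underlying scalar functions.

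Fixing a spectral decomposition $\Sigma = PDP^\top$ and writing $\tilde V = P^\top V P$, the orthogonal equivariance $f(P\Sigma P^\top) = P f(\Sigma) P^\top$ reduces the claim, for each of the three functions, to the entrywise identity $(P^\top \partial_V f(\Sigma) P)_{ab} = \delta(f,D)(a,b)\,\tilde V_{ab}$. For $f(x)=x^k$, I would apply Leibniz to get $\partial_V \Sigma^k = \sum_{j=0}^{k-1}\Sigma^j V \Sigma^{k-1-j}$, conjugate by $P$, and read the $(a,b)$ entry as $\tilde V_{ab}\sum_j d_a^j d_b^{k-1-j}$, which collapses by the geometric sum to $\tilde V_{ab}\,(d_a^k-d_b^k)/(d_a-d_b)$. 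Linearity together with termwise differentiation of the exponential series (uniformly convergent on compact sets of $\Sym_n$) transports this to $f=\exp$.

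The case $f=\log$ then follows by differentiating $\exp\circ\log = \mathrm{id}$ on ${SPD}_n$: since $\log\Sigma = P\log(D)P^\top$ shares the eigenbasis, the chain rule combined with the exponential formula already proved gives $\delta(\exp,\log D)(a,b)\,(P^\top \partial_V\log\Sigma\,P)_{ab} = \tilde V_{ab}$, and dividing by the scalar factor identifies $\delta(\log,D)(a,b) = (\log d_a - \log d_b)/(d_a - d_b)$. For $\pow_\theta = \exp\circ(\theta\log)$ I would apply the chain rule once more, stacking the $\log$ and $\exp$ formulas; the two difference quotients telescope to $(d_a^\theta - d_b^\theta)/(\theta(d_a - d_b))$, yielding the stated expression for $\delta(\pow_\theta,D)$.

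Positivity is then immediate from monotonicity on $\R_+^*$: the function $\log$ is strictly increasing, so $\delta(\log,D)(a,b) > 0$ (with diagonal limit $1/d_a$); and $x\mapsto x^\theta$ is strictly monotone of sign $\mathrm{sign}(\theta)$, so $\delta(\pow_\theta,D)(a,b)$ has the sign of $\theta$, making the ratio $\frac{1}{\theta}\delta(\pow_\theta,D)(a,b)$ strictly positive (with diagonal limit $d_a^{\theta-1}$). The only real subtlety I anticipate is the case of repeated eigenvalues in $D$, but each $\delta(f,D)$ extends continuously across the diagonal by $f'(d_a)$ since $\exp$, $\log$ and $\pow_\theta$ are smooth on $\R_+^*$, so the limiting case is handled uniformly by continuity.
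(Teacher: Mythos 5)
Your proof is correct and follows essentially the same route as the paper's: establish the formula for $\pow_k$ via the Leibniz/geometric-sum computation, pass to $\exp$ by linearity of the power series, then obtain $\log$ by inverting $\exp$ and $\pow_\theta=\exp\circ(\theta\log)$ by composition, with the positivity and repeated-eigenvalue points made explicit where the paper leaves them implicit. One bookkeeping slip: the chain rule through $\exp\circ(\theta\log)$ picks up a factor $\theta$ from the inner map, so the telescoped quotient is $(d_a^\theta-d_b^\theta)/(d_a-d_b)=\delta(\pow_\theta,D)(a,b)$ rather than $(d_a^\theta-d_b^\theta)/(\theta(d_a-d_b))=\frac{1}{\theta}\delta(\pow_\theta,D)(a,b)$ as you wrote.
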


\begin{proof}[Lemma \ref{diff}]
Once shown for $f=\exp$, it is easy to get for $f=\log$ by inversion and for $f=\pow_\theta=\exp\circ\,\theta\log$ by composition. But the case $f=\exp$ itself reduces to the case $f=\pow_k$ with $k\in\N$ by linearity, so we focus on this last case. As $\partial_V\pow_k(\Sigma)=\sum_{l=0}^{k-1}{\Sigma^lV\Sigma^{k-1-l}}=P\partial_{P^\top VP}\pow_k(D)P^\top$ and $\partial_{P^\top VP}\pow_k(D)(i,j)=\sum_{l=0}^{k-1}{D^lP^\top VPD^{k-1-l}(i,j)}=\frac{d_i^k-d_j^k}{d_i-d_j}P^\top VP(i,j)$, we get $\partial_V\pow_k(\Sigma)=P(\delta(\pow_k,D)\bullet P^\top VP)P^\top$.
\end{proof}

\begin{proof}[Theorem \ref{MPE}]
Let $\theta_1,\theta_2\in\R^*$. For a spectral decomposition $\Sigma=PDP^\top$, the matrix $A$ defined by $A(i,j)=\sqrt{\frac{1}{\theta_1}\delta(\pow_{\theta_1},D)(i,j)\frac{1}{\theta_2}\delta(\pow_{\theta_2},D)(i,j)}>0$ satisfies $g_\Sigma^{E,\theta_1,\theta_2}(X,Y)=\tr((A\bullet P^\top XP)(A\bullet P^\top YP))$. Symmetry and non-negativity are clear since they come from the Frobenius scalar product. Finally, if $g_\Sigma^{E,\theta_1,\theta_2}(X,X)=0$, then $A\bullet P^\top XP=0$ so $X=0$. So $g^{E,\theta_1,\theta_2}$ is a metric.
If $\theta_1=0$, the matrix $A$ defined by $A(i,j)=\sqrt{\delta(\log,D)(i,j)\frac{1}{\theta_2}\delta(\pow_{\theta_2},D)(i,j)}>0$ satisfies the same property and $g^{E,0,\theta_2}$ is a metric. $\Box$
\end{proof}

\section{The family of mixed-power-affine metrics}

In previous sections, we  defined our balanced metric from a pair of two flat metrics and we showed that it corresponded to the duality of (Levi-Civita) connections in information geometry. In this section, we investigate the balanced metric of two non-flat metrics and we observe that the corresponding Levi-Civita connections cannot be dual with respect to this balanced metric.

The family of power-affine metrics $g^{A,\theta}$ \cite{Thanwerdas19} indexed by the power $\theta\ne 0$ are defined by pullback of the affine-invariant metric by the power function $\pow_\theta:(\M,\theta^2g^{A,\theta})\lto(\M,g^A)$:
\begin{align}
    g^{A,\theta}_\Sigma(X,Y) &=\frac{1}{\theta^2}\tr(\Sigma^{-\theta}\,\partial_X\pow_\theta(\Sigma)\,\Sigma^{-\theta}\,\partial_Y\pow_\theta(\Sigma))
\end{align}
This family comprise the affine-invariant metric for $\theta=1$ and tends to the log-Euclidean metric when the power $\theta$ goes to 0. We consider that the log-Euclidean metric belongs to the family and we denote $g^{A,0}:=g^{LE}$.

As these metrics have no cut locus because they endow the manifold with a negatively curved Riemannian symmetric structure, there exists a unique geodesic between two given points. Therefore, a canonical parallel transport can be defined along geodesics. This allows to define the balanced bilinear form of two metrics without cut locus.

\begin{definition}[Balanced bilinear form]\label{balanced2}
Let $g,g^*$ be two metrics without cut locus on ${SPD}_n$ and $\Pi,\Pi^*$ the associated geodesic parallel transports. We define the balanced bilinear form $g^0_\Sigma(X,Y)=\tr((\Pi_{\Sigma\TO I_n}X)(\Pi^*_{\Sigma\TO I_n}Y))$.
\end{definition}

Given that the geodesic parallel transport on the manifold $(\M,g^{A,\theta})$ is $\Pi_{\Sigma\TO I_n}:X\in T_\Sigma\M\lmto\frac{1}{\theta}\Sigma^{-\theta/2}\partial_X\pow_\theta(\Sigma)\Sigma^{-\theta/2}\in T_{I_n}\M$, we define the mixed-power-affine metrics $g^{A,\theta_1,\theta_2}$ as the balanced metric of the power-affine metrics $g^{A,\theta_1}$ and $g^{A,\theta_2}$, where $\theta_1,\theta_2\in\R$ and $\theta=(\theta_1+\theta_2)/2$:
\begin{equation}
    g^{A,\theta_1,\theta_2}_\Sigma(X,Y)=\frac{1}{\theta_1\theta_2}\tr(\Sigma^{-\theta}\,\partial_X\pow_{\theta_1}(\Sigma)\,\Sigma^{-\theta}\,\partial_Y\pow_{\theta_2}(\Sigma)).
\end{equation}

Note that the family of mixed-power-affine metrics contains the $\theta$-power-affine metric for $(\theta_1,\theta_2)=(\theta,\theta)$, including the affine-invariant metric for $\theta=1$ and the log-Euclidean metric for $\theta=0$. This family has two symmetries since $g^{A,\theta_1,\theta_2}=g^{A,\pm\theta_1,\pm\theta_2}$, they come from the inverse-consistency of the affine-invariant metric. This family has a non-empty intersection with the family of mixed-power-Euclidean metrics since $g^{A,\theta_1,-\theta_1}=g^{E,\theta_1,-\theta_1}=g^{A,\theta_1}$ for all $\theta_1\in\R$.

The fact that $g^{A,\theta_1,\theta_2}$ is a metric can be shown exactly the same way as in the mixed-power-Euclidean case thanks to the equality $\Sigma^{-\theta/2}\partial_V\pow_\theta(\Sigma)\Sigma^{-\theta/2}=P(\varepsilon(\pow_\theta,D)\bullet P^\top VP)P^\top$ where $\varepsilon(\pow_\theta,D)=(d_id_j)^{-\theta/2}\delta(\pow_\theta,D)$ and where $\delta(\pow_\theta,D)$ has been defined in Lemma \ref{diff}. This is stated in Theorem \ref{MPA}.

\begin{theorem}\label{MPA}
The bilinear form $g^{A,\theta_1,\theta_2}$ is symmetric and positive definite. Hence it is a metric on ${SPD}_n$ and $g^{A,\theta_1,\theta_2}=g^{A,\theta_2,\theta_1}$.
\end{theorem}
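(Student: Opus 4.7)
The plan is to follow the pattern of the proof of Theorem \ref{MPE}: reduce the quadratic form to a Hadamard-weighted Frobenius inner product of the shape $\tr((A\bullet P^\top XP)(A\bullet P^\top YP))$, where $\Sigma=PDP^\top$ is a spectral decomposition and $A\in\Sym_n$ has strictly positive entries. Once that form is reached, symmetry is immediate, and positive-definiteness follows from the strict positivity of $A$: if $g^{A,\theta_1,\theta_2}_\Sigma(X,X)=0$ then $A\bullet P^\top XP=0$, which forces $P^\top XP=0$ and hence $X=0$.

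Setting $\theta=(\theta_1+\theta_2)/2$, the first step is to split the factor $\Sigma^{-\theta}$ symmetrically between the two sides via trace cyclicity:
\[
g^{A,\theta_1,\theta_2}_\Sigma(X,Y)=\tfrac{1}{\theta_1\theta_2}\tr(\tilde X\,\tilde Y),
\]
with $\tilde X=\Sigma^{-\theta/2}\partial_X\pow_{\theta_1}(\Sigma)\Sigma^{-\theta/2}$ and $\tilde Y=\Sigma^{-\theta/2}\partial_Y\pow_{\theta_2}(\Sigma)\Sigma^{-\theta/2}$. Both $\tilde X,\tilde Y$ are symmetric. Applying Lemma \ref{diff} and observing that left- and right-multiplication by the diagonal matrix $D^{-\theta/2}$ acts entrywise as multiplication by $(d_kd_l)^{-\theta/2}$ on the Hadamard factor, I obtain $\tilde V=P(\varepsilon_i\bullet P^\top VP)P^\top$ with $\varepsilon_i(k,l)=(d_kd_l)^{-\theta/2}\delta(\pow_{\theta_i},D)(k,l)$ for $i\in\{1,2\}$. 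Substituting, cancelling the orthogonal factors via $P^\top P=I_n$, and using the elementary identity $\tr((M_1\bullet X')(M_2\bullet Y'))=\tr((A\bullet X')(A\bullet Y'))$ with $A=\sqrt{M_1\bullet M_2}$ (entrywise square root), valid whenever $X',Y',M_1,M_2$ are symmetric and $M_1,M_2$ are entrywise positive, yields the desired form with
\[
A(k,l)=\sqrt{\tfrac{1}{\theta_1}\varepsilon_1(k,l)\,\tfrac{1}{\theta_2}\varepsilon_2(k,l)}.
\]

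The only point that actually needs verification is the positivity of every entry of $A$. This reduces to $\tfrac{1}{\theta_i}\delta(\pow_{\theta_i},D)(k,l)>0$, provided by Lemma \ref{diff}, together with $(d_kd_l)^{-\theta/2}>0$, which is immediate since $D$ is positive. The boundary cases $\theta_1=0$ or $\theta_2=0$ are handled by replacing $\pow_{\theta_i}$ by $\log$ in the corresponding factor and invoking $\delta(\log,D)(k,l)>0$ from Lemma \ref{diff}, exactly as in the treatment of $g^{E,0,\theta_2}$ in Theorem \ref{MPE}. I do not anticipate a serious obstacle: the whole argument is a routine diagonalization, and the proof of Theorem \ref{MPE} provides a direct template—the only novelty here is the extra pair of $\Sigma^{-\theta/2}$ factors, which simply modifies $\delta$ into $\varepsilon_i$ without altering the sign structure.
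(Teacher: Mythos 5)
Your proof is correct and follows exactly the route the paper intends: the paper only sketches this argument, remarking that Theorem \ref{MPA} follows "the same way" as Theorem \ref{MPE} via the identity $\Sigma^{-\theta/2}\partial_V\pow_\theta(\Sigma)\Sigma^{-\theta/2}=P(\varepsilon(\pow_\theta,D)\bullet P^\top VP)P^\top$, and your write-up supplies precisely the missing details (the symmetric splitting of $\Sigma^{-\theta}$ by trace cyclicity, the entrywise action of $D^{-\theta/2}$, and the positivity of the resulting Hadamard weight). The only cosmetic difference is that you attach the weight $(d_kd_l)^{-\theta/2}$ with $\theta=(\theta_1+\theta_2)/2$ to both factors rather than $(d_kd_l)^{-\theta_i/2}$ to each, which yields the same product and hence the same matrix $A$.
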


Power-affine metrics being non-flat, $(\M,g^{A,\theta_1,\theta_2},\nabla^{A,\theta_1},\nabla^{A,\theta_2})$, where $\nabla^{A,\theta_1}$ and $\nabla^{A,\theta_2}$ are Levi-Civita connections of $g^{A,\theta_1}$ and $g^{A,\theta_2}$, cannot be a dually-flat manifold. Actually, the two connections are even not dual. It can be understood by comparison with previous sections since the duality was a consequence of the independence of the parallel transport with respect to the chosen curve, which was a consequence of the flatness of the two connections. Moreover, in the Definition \ref{balanced2}, the vectors are parallel transported along two different curves (the geodesics relative to each connection) so it may exists a better definition for the balanced bilinear form of two metrics without cut locus or even of two general metrics.

\section{Conclusion}
The principle of balanced bilinear form is a procedure on SPD matrices that takes a pair of flat metrics or metrics without cut locus and builds a new metric based on the parallel transport of the initial metrics. When the two initial metrics are flat, we showed that the two Levi-Civita connections are dual with respect to the balanced metric. When the two initial metrics are not flat, the two Levi-Civita connections seem not to be dual, so the principle of balanced metrics does not reduce to the concept of dual Levi-Civita connections. A challenging objective for future works is to define properly this principle for other general pairs of metrics and to find conditions under which the balanced bilinear form is a metric.

\paragraph{Acknowledgements.} This project has received funding from the European Research Council (ERC) under the European Union’s Horizon 2020 research and innovation programme (grant agreement No 786854). This work has been supported by the French government, through the UCAJEDI Investments in the Future project managed by the National Research Agency (ANR) with the reference number ANR-15-IDEX-01.

\bibliographystyle{unsrt}
%\bibliography{biblio}

\section{Appendix: proofs}

Let us first recall the statements of Theorem \ref{alpha} and Lemma \ref{expectations}.

~\\
\textbf{Theorem \ref{alpha}. }$\alpha$\textbf{-connections of the centered multivariate normal model}
\textit{In the global basis $(\partial_i)_{1\ls i\ls N=n(n+1)/2}$ of $\M={SPD}_n$ given by the inclusion $\M\hookrightarrow\Sym(n)\simeq\R^N$, writing $\partial_XY=X^i(\partial_i Y^j)\partial_j$, the $\alpha$-connections of the multivariate centered normal model are given by the following formula (\ref{alphaconnection}):
\begin{equation*}
    \nabla^{(\alpha)}_XY=\partial_XY-\frac{1+\alpha}{2}(X\Sigma^{-1}Y+Y\Sigma^{-1}X).
\end{equation*}
The mixture $m$-connection ($\alpha=-1$) is the Levi-Civita connection of the Euclidean metric $g^E_\Sigma(X,Y)=\tr(XY)$. The exponential $e$-connection ($\alpha=1$) is the Levi-Civita connection of the inverse-Euclidean metric, i.e. the pullback of the Euclidean metric by matrix inversion, $g^I_\Sigma(X,Y)=\tr(\Sigma^{-2}X\Sigma^{-2}Y)$.}\\

The formula (\ref{alphaconnection}) can be proved thanks to Lemma \ref{expectations} which gives the results of expressions of type $\int_{\R^n}{x^\top\Sigma^{-1}X\Sigma^{-1}Y\Sigma^{-1}Z\Sigma^{-1}x\exp\left(-\frac{1}{2}x^\top\Sigma^{-1}x\right)dx}$, with $y=\Sigma^{-1/2}x$, $A=\Sigma^{-1/2}X\Sigma^{-1/2}$, $B=\Sigma^{-1/2}Y\Sigma^{-1/2}$ and $C=\Sigma^{-1/2}Z\Sigma^{-1/2}$. If one wants to avoid using the third formula of Lemma \ref{expectations}, one can rely on the formula (\ref{alphaconnection}) in the case $\alpha=0$ which is already known from \cite{Skovgaard84}.

~\\
\textbf{Lemma \ref{expectations}.}
\textit{For $A,B,C\in\Sym_n$:
\begin{align*}
    \E_{I_n}(y\lmto y^\top Ay) &=\tr(A),\\
    \E_{I_n}(y\lmto y^\top Ayy^\top By) &=\tr(A)\tr(B)+2\tr(AB),\\
    \E_{I_n}(y\lmto y^\top Ayy^\top Byy^\top Cy) &=\tr(A)\tr(B)\tr(C)+8\tr(ABC)\\
    & \quad +2(\tr(AB)\tr(C)+\tr(BC)\tr(A)+\tr(CA)\tr(B)).
\end{align*}}

\subsection{Proof of Lemma \ref{expectations}}

Let $A,B,C\in\Sym_n$ and let $a_1\gs...\gs a_n$, $b_1\gs...\gs b_n$ and $c_1\gs...\gs c_n$ be their respective eigenvalues, $A=PA'P^\top$, $B=QB'Q^\top$, $C=RC'R^\top$ with $P,Q,R\in O(n)$ and $A'=\diag(a_1,...,a_n)$, $B'=\diag(b_1,...,b_n)$ and $C'=\diag(c_1,...,c_n)$.

We recall that if $X\sim\mc{N}(0,1)$, then $\E(X^{2k+1})=0$, $\E(X^2)=1$, $\E(X^4)=3$ and $\E(X^6)=15$.

\subsubsection{First equality}

The first equality is well known:
\begin{align*}
    \E_{I_n}(y\lmto y^\top Ay) &=\int_{\R^n}{y^\top Ay\frac{1}{\sqrt{2\pi}^n}\exp(y^\top y)dy}\\
    &=\int_{\R^n}{z^\top A'z\frac{1}{\sqrt{2\pi}^n}\exp(z^\top z)dz}\\
    &=\sum_{i=1}^n{\int_\R{a_iz_i^2\frac{1}{\sqrt{2\pi}^n}\exp(z_i^2)dz_i}}\\
    &=\sum_{i=1}^n{a_i}=\tr(A).
\end{align*}

\subsubsection{Second equality}

Due to the non commutativity a priori of the matrices $A$ and $B$, they are not diagonalizable in the same orthonormal basis so the calculus needs a bit more attention:
\begin{align*}
    \E_{I_n}(y\lmto y^\top Ayy^\top By)
    &=\int_{\R^n}{y^\top Ayy^\top By\frac{1}{\sqrt{2\pi}^n}\exp(y^\top y)dy}\\
    &=\int_{\R^n}{z^\top A'zz^\top U^\top B'Uz\frac{1}{\sqrt{2\pi}^n}\exp(z^\top z)dz}\\
    &=\int_{\R^n}{\left(\sum_{i=1}^n{a_iz_i^2}\right)\left(\sum_{j=1}^n{b_j(Uz)_j^2}\right)\frac{1}{\sqrt{2\pi}^n}\exp(z^\top z)dz},
\end{align*}
where $U=Q^\top P\in O(n)$. But $(Uz)_j^2=\sum_{k=1}^n{\sum_{l=1}^n{U(j,k)U(j,l)z_kz_l}}$ so:

\begin{align*}
    \E_{I_n}(y\lmto y^\top Ayy^\top By)
    &=\sum_{i,j,k,l}{a_ib_jU(j,k)U(j,l)\int_{\R^n}{{z_i^2z_kz_l}\prod_{m=1}^n{\frac{1}{\sqrt{2\pi}}\exp(z_m^2)}dz_m}}.
\end{align*}

According to the values of $i,k,l$, the integral takes different values:
\begin{enumerate}
    \item if $k=l=i$, it is equal to $\E(X^4)=3=2+1$,
    \item if $k=l\ne i$, it is equal to $\E(X^2)^2=1$,
    \item elsewhere, it is equal to $0$.
\end{enumerate}

\begin{align*}
    \E_{I_n}(y\lmto y^\top Ayy^\top By)
    &=\sum_{i,j}{a_ib_j[(2+1)U(j,i)U(j,i)+\sum_{k\ne i}{U(j,k)U(j,k)}]}\\
    &=\sum_{i,j}{a_ib_j[2U(j,i)^2+\sum_{k=1}^n{U(j,k)^2}]}\\
    &=2\tr(A'U^\top B'U)+\tr(A')\tr(B')\\
    &=2\tr(AB)+\tr(A)\tr(B).
\end{align*}

\subsubsection{Third equality}

The third equality is a bit more tedious to obtain because of the greater number of cases and especially because the rearrangement of the terms is more complex. Let $U=Q^\top P$ and $V=R^\top P\in O(n)$. Then:
\begin{align*}
    &\E_{I_n}(y\lmto y^\top Ayy^\top Byy^\top Cy)\\
    &=\int_{\R^n}{y^\top Ayy^\top Byy^\top Cy\frac{1}{\sqrt{2\pi}^n}\exp(y^\top y)dy}\\
    &=\int_{\R^n}{z^\top A'zz^\top U^\top B'Uzz^\top V^\top C'Vz\frac{1}{\sqrt{2\pi}^n}\exp(z^\top z)dz}\\
    &=\int_{\R^n}{\left(\sum_{i=1}^n{a_iz_i^2}\right)\left(\sum_{j=1}^n{b_j(Uz)_j^2}\right)\left(\sum_{j=1}^n{c_k(Vz)_k^2}\right)\frac{1}{\sqrt{2\pi}^n}\exp(z^\top z)dz}\\
    &=\sum_{i,j,k,l,m,r,s}{a_ib_jc_kU(j,l)U(j,m)V(k,r)V(k,s)\int_{\R^n}{{z_i^2z_lz_mz_rz_s}\prod_{t=1}^n{\frac{1}{\sqrt{2\pi}}\exp(z_t^2)}dz_t}}.
\end{align*}

According to the values of $i,l,m,r,s$, the integral takes different values:
\begin{enumerate}
    \item if $l=m=r=s=i$, it is equal to $\E(X^6)=15=6\times 3-3$,
    \item if $l=r=i\ne m=s$, it is equal to $\E(X^4)\E(X^2)=3$,
    \item if $l=s=i\ne m=r$, it is equal to $\E(X^4)\E(X^2)=3$,
    \item if $m=r=i\ne l=s$, it is equal to $\E(X^4)\E(X^2)=3$,
    \item if $m=s=i\ne l=r$, it is equal to $\E(X^4)\E(X^2)=3$,
    \item if $l=m=i\ne r=s$, it is equal to $\E(X^4)\E(X^2)=3$,
    \item if $l=m\ne i=r=s$, it is equal to $\E(X^4)\E(X^2)=3$,
    \item if $l=m=r=s\ne i$, it is equal to $\E(X^4)\E(X^2)=3=1+1+1$,
    \item if $l=m\ne r=s\ne i\ne l$, it is equal to $\E(X^2)^3=1$,
    \item if $l=r\ne m=s\ne i\ne l$, it is equal to $\E(X^2)^3=1$,
    \item if $l=s\ne m=r\ne i\ne l$, it is equal to $\E(X^2)^3=1$,
    \item elsewhere, it is equal to $0$.
\end{enumerate}

Instead of two terms as in the second equality, we have now eleven terms to rearrange. Lines 2 to 5 lack the term of line 1 to be identified as a trace term. Line 2 completed with line 1 gives the following term:
$$3\sum_{i,j,k}{a_ib_jc_k\sum_{m=1}^n{U(j,i)U(j,m)V(k,i)V(k,m)}}=3\tr(ABC)$$
and we can be convinced by symmetry of the indexes that lines 3, 4, 5 give the same result.

Lines 6 and 7 also lack the term of line 1 to be identified as a trace term. Line 6 completed with line 1 gives the following term:
$$3\sum_{i,j,k}{a_ib_jc_k\sum_{r=1}^n{U(j,i)U(j,i)V(k,r)V(k,r)}}=3\tr(AB)\tr(C)$$
and we can be convinced by symmetry of the indexes that line 7 gives $3\tr(AC)\tr(B)$.

Let us write the equality with the remaining terms, after distributing line 8 to lines 9, 10, 11 and merging lines 10 and 11:
\begin{align*}
    &\E_{I_n}(y\lmto y^\top Ayy^\top Byy^\top Cy)\\
    &=-3\sum_{i,j,k}{a_ib_jc_kU(j,i)^2V(k,i)^2} +12\tr(ABC)+3\tr(AB)\tr(C)+3\tr(AC)\tr(B)\\
    &+\sum_{i,j,k}{a_ib_jc_k\sum_{l\ne i}{\left[\sum_{r\ne i}{U(j,l)^2V(k,r)^2}+2\sum_{m\ne i}{U(j,l)U(j,m)V(k,l)V(k,m)}\right]}}
\end{align*}

We compute the last term of the previous expression in two steps. In the first step, we fix $i,j,k$ and we add and remove terms $r=i,m=i,l=i$. In the second step, we sum for $i,j,k\in\llbracket 1,n\rrbracket$.
\begin{align*}
    &\sum_{l\ne i}{\left[\sum_{r\ne i}{U(j,l)^2V(k,r)^2}+2\sum_{m\ne i}{U(j,l)U(j,m)V(k,l)V(k,m)}\right]}\\
    &=\sum_{l\ne i}{\left[\sum_{r}{U(j,l)^2V(k,r)^2}+2\sum_{m}{U(j,l)U(j,m)V(k,l)V(k,m)}\right]}\\
    &\quad -\sum_{l\ne i}{\left[U(j,l)^2V(k,i)^2+2U(j,l)U(j,i)V(k,l)V(k,i)\right]}\\
    &=\sum_{l}{\left[\sum_{r}{U(j,l)^2V(k,r)^2}+2\sum_{m}{U(j,l)U(j,m)V(k,l)V(k,m)}\right]}\\
    &\quad -\sum_{l}{\left[U(j,l)^2V(k,i)^2+2U(j,l)U(j,i)V(k,l)V(k,i)\right]}\\
    &\quad -\left[\sum_{r}{U(j,i)^2V(k,r)^2}+2\sum_{m}{U(j,i)U(j,m)V(k,i)V(k,m)}-3U(j,i)^2V(k,i)^2\right]\\
    &=1+2UV^\top(j,k)^2-V(k,i)^2-2UV^\top(j,k)U(j,i)V(k,i)\\
    &\quad -U(j,i)^2-2U(j,i)V(k,i)UV^\top(j,k)+3U(j,i)^2V(k,i)^2
\end{align*}

\begin{align*}
    &\sum_{i,j,k}{a_ib_jc_k\sum_{l\ne i}{\left[\sum_{r\ne i}{U(j,l)^2V(k,r)^2}+2\sum_{m\ne i}{U(j,l)U(j,m)V(k,l)V(k,m)}\right]}}\\
    &=\tr(A)\tr(B)\tr(C)+2\tr(A)\tr(BC)-(\tr(AB)\tr(C)+\tr(AC)\tr(B))-4\tr(ABC)\\
    &\quad +3\sum_{i,j,k}{a_ib_jc_kU(j,i)^2V(k,i)^2}
\end{align*}

Then, the remaining uncomputed term cancels with the first term in $\E_{I_n}(y\lmto y^\top Ayy^\top Byy^\top Cy)$. Finally, we get the third formula of Lemma \ref{expectations}:

\begin{align*}
    \E_{I_n}(y\lmto y^\top Ayy^\top Byy^\top Cy) &=\tr(A)\tr(B)\tr(C)+8\tr(ABC)\\
    & \quad +2(\tr(AB)\tr(C)+\tr(BC)\tr(A)+\tr(CA)\tr(B)).
\end{align*}

\subsection{Proof of Theorem \ref{alpha}}

\subsubsection{Proof of formula (\ref{alphaconnection})}

The Christoffel symbols are given by \cite{Amari00}:
\begin{equation*}
    \left(\Gamma_{ijk}^{(\alpha)}\right)_\Sigma=\E_\Sigma\left[\left(\partial_i\partial_j l+\frac{1-\alpha}{2}\partial_i l\partial_j l\right)\partial_k l\right].
\end{equation*}
where $l$ is the log-likelihood of the centered multivariate normal model, namely $l_\Sigma(x)=\log p_\Sigma(x)=\frac{1}{2}\left(-n\log(2\pi)-\log\det\Sigma+x^\top\Sigma^{-1}x\right)$.

So we have to compute $\E_\Sigma[\partial_i\partial_j l\partial_k l]$ and $\E_\Sigma[\partial_i l\partial_j l\partial_k l]$. We can either use the formulas of Lemma \ref{expectations} to compute each, or compute the first term and deduce the second one thanks to the formula of the Levi-Civita connection \cite{Skovgaard84} corresponding to the case $\alpha=0$.

We recall that $(Z^k\partial_kl)_{|\Sigma}(x)=d_\Sigma l(Z)(x)=-\frac{1}{2}[\tr(\Sigma^{-1}Z)+x^\top\Sigma^{-1}Z\Sigma^{-1}x]$ (cf. table in section 2.3). As a consequence, deriving once more in this basis, we get: $(X^iY^j\partial_i\partial_jl)_{|\Sigma}(x)=\frac{1}{2}[\tr(\Sigma^{-1}X\Sigma^{-1}Y)+x^\top\Sigma^{-1}(X\Sigma^{-1}Y+Y\Sigma^{-1}X)\Sigma^{-1}x]$. Denoting $A=\Sigma^{-1/2}(X\Sigma^{-1}Y+Y\Sigma^{-1}X)\Sigma^{-1/2}$ and $B=\Sigma^{-1/2}Z\Sigma^{-1/2}$, we can now compute the first term in the Christoffel symbols:
$X^iY^jZ^k\E_\Sigma[\partial_i\partial_j l\partial_k l]
=\frac{1}{4}(\tr(\Sigma^{-1}X\Sigma^{-1}Y)\tr(\Sigma^{-1}Z)(-1+1)+\tr(A)\tr(B)-(2\tr(AB)+\tr(A)\tr(B)))=-\frac{1}{2}\tr(AB)=-\tr(\Sigma^{-1}X\Sigma^{-1}Y\Sigma^{-1}Z)$.\\

Let us compute the second term in the Christoffel symbols thanks to the two previously detailed methods.

\paragraph{1\textsuperscript{st} method: using the third equality of Lemma \ref{expectations}}~

Thanks to the expression of $\partial_il$ given above, and denoting $A=\Sigma^{-1/2}X\Sigma^{-1/2}$, $B=\Sigma^{-1/2}Y\Sigma^{-1/2}$ and $C=\Sigma^{-1/2}Z\Sigma^{-1/2}$, we are now able to compute
$X^iY^jZ^k\E_\Sigma[\partial_i l\partial_j l\partial_k l]
=\frac{1}{8}(-\tr(A))\tr(B)\tr(C)(-1+1+1+1)-\tr(A)(2\tr(BC)+\tr(B)\tr(C))-\tr(B)(2\tr(AC)+\tr(A)\tr(C))-\tr(C)(2\tr(AB)+\tr(A)\tr(B))+\frac{1}{8}(8\tr(ABC)+2(\tr(AB)\tr(C)+\tr(AC)\tr(B)+\tr(BC)\tr(A))+\tr(A)\tr(B)\tr(C)))=\tr(ABC)=\tr(\Sigma^{-1}X\Sigma^{-1}Y\Sigma^{-1}Z)$.

\paragraph{2\textsuperscript{nd} method: using the formula of the Levi-Civita connection}~

From \cite{Skovgaard84}, we know that $X^iY^jZ^k\left(\Gamma_{ijk}^{(0)}\right)_\Sigma=-\frac{1}{2}\tr(\Sigma^{-1}X\Sigma^{-1}Y\Sigma^{-1}Z)$. On the other hand, $X^iY^jZ^k\left(\Gamma_{ijk}^{(0)}\right)_\Sigma=X^iY^jZ^k\E_\Sigma(\partial_i\partial_jl\partial_kl)+\frac{1}{2}X^iY^jZ^k\E(\partial_il\partial_jl\partial_kl)$ and we already computed $X^iY^jZ^k\E_\Sigma(\partial_i\partial_jl\partial_kl)=-\tr(\Sigma^{-1}X\Sigma^{-1}Y\Sigma^{-1}Z)$. We can deduce the second term $X^iY^jZ^k\E(\partial_il\partial_jl\partial_kl)=\tr(\Sigma^{-1}X\Sigma^{-1}Y\Sigma^{-1}Z)$.\\

To conclude:
\begin{align*}
    g_\Sigma(\nabla^{(\alpha)}_XY-\partial_XY,Z)&=X^iY^jZ^k\left(\Gamma_{ijk}^{(\alpha)}\right)_\Sigma\\
    &=-\frac{1+\alpha}{2}\tr(\Sigma^{-1}X\Sigma^{-1}Y\Sigma^{-1}Z)\\
    &=-\frac{1+\alpha}{4}\tr(\Sigma^{-1}(X\Sigma^{-1}Y+Y\Sigma^{-1}X)\Sigma^{-1}Z)
\end{align*} so $(\nabla^{(\alpha)}_XY)_{|\Sigma}=(\partial_XY)_{|\Sigma}-\frac{1+\alpha}{2}(X\Sigma^{-1}Y+Y\Sigma^{-1}X)$ and formula (\ref{alphaconnection}) is proved.

\subsubsection{Mixture and exponential connections}

For $\alpha=-1$, there only remains the Levi-Civita connection of the Euclidean metric $\partial$ in the formula: this is the mixture $m$-connection. On the other hand, the exponential $e$-connection ($\alpha=1$) is given by $\nabla^{(e)}_XY=\partial_XY-(X\Sigma^{-1}Y+Y\Sigma^{-1}X)$.

Let us compute the Levi-Civita connection $\nabla^I$ of the inverse-Euclidean metric given by $g^I_\Sigma(X,Y)=\tr(\Sigma^{-2}X\Sigma^{-2}Y)$ thanks to the Koszul formula:
\begin{align*}
    2g_\Sigma^I(\nabla^I_XY,Z)&=X_\Sigma(g^I(Y,Z))+Y_\Sigma(g^I(X,Z))-Z_\Sigma(g^I(X,Y))\\
    &\quad +g^I([X,Y],Z)-g^I([X,Z],Y)-g^I([Y,Z],X)\\
    &=-2\tr((\Sigma^{-2}X\Sigma^{-1}+\Sigma^{-1}X\Sigma^{-2})Y\Sigma^{-2}Z)\\
    &\quad -2\tr((\Sigma^{-2}Y\Sigma^{-1}+\Sigma^{-1}Y\Sigma^{-2})X\Sigma^{-2}Z)\\
    &\quad +2\tr((\Sigma^{-2}Z\Sigma^{-1}+\Sigma^{-1}Z\Sigma^{-2})X\Sigma^{-2}Y)\\
    &\quad +2\tr(\Sigma^{-2}\partial_XY\Sigma^{-2}Z)\\
    &=2\tr(\Sigma^{-2}[\partial_XY-(X\Sigma^{-1}Y+Y\Sigma^{-1}X)]\Sigma^{-2}Z)
\end{align*}
So $\nabla^I_XY=\partial_XY-(X\Sigma^{-1}Y+Y\Sigma^{-1}X)$ and $\nabla^{(e)}=\nabla^I$.

\end{document}